\theoremstyle{plain}
\newtheorem{theorem}{Theorem}[section]
\newtheorem{lemma}[theorem]{Lemma}
\theoremstyle{definition}
\newtheorem{definition}[theorem]{Definition}
\newtheorem{example}[theorem]{Example}
\DeclareMathOperator{\Tr}{Tr}
\DeclareMathOperator{\Sp}{Sp}
\newtheorem{note}{Note}
\newtheorem{remark}{Remark}
\numberwithin{equation}{section}
\numberwithin{equation}{section}
\begin{document}
\title[Some integral inequalities for operator AG-convex functions]%
{Some integral inequalities for operator arithmetic-geometrically convex functions}
\author[A. Taghavi, V. Darvish and H. M. Nazari]%
{Ali Taghavi, Vahid Darvish and Haji Mohammad Nazari}

\newcommand{\acr}{\newline\indent}
\address{Department of Mathematics,\\ Faculty of Mathematical
Sciences,\\ University of Mazandaran,\\ P. O. Box 47416-1468,\\
Babolsar, Iran.} \email{taghavi@umz.ac.ir,  vahid.darvish@mail.com (vdarvish.wordpress.com), m.nazari@stu.umz.ac.ir} 

\subjclass[2010]{47A63, 15A60, 47B05, 47B10, 26D15}
\keywords{Operator convex function, Arithmetic convex, Geometric convex, Unitarily invariant norm}

\begin{abstract}
In this paper, we introduce the concept of operator arithmetic-geometrically convex functions for positive linear operators and prove some Hermite-Hadamard type inequalities for these functions. As applications, we obtain trace inequalities for operators which give some refinements of previous results. Moreover, some unitarily invariant norm inequalities are established.
\end{abstract}

\maketitle

\section{\textbf{Introduction and preliminaries }}\label{intro}
Let $\mathcal{A}$ be a sub-algebra of $B(H)$ stand for the commutative  $C^{*}$-algebra of  all bounded linear
operators on a complex Hilbert space $H$ with inner
product $\langle \cdot,\cdot\rangle$. An operator $A\in \mathcal{A}$ is positive and write $A\geq0$ if $\langle
Ax,x\rangle\geq0$ for all $x\in H$. Let $\mathcal{A}^{+}$ stand for all strictly positive operators in $\mathcal{A}$.

Let $A$ be a self-adjoint operator in $\mathcal{A}$. The Gelfand map establishes a $\ast$-isometrically isomorphism $\Phi$ between the set $C(\Sp(A))$ of all continuous functions defined on the spectrum of $A$, denoted $\Sp(A)$, and the $C^{*}$-algebra $C^{*}(A)$ generated by $A$ and the identity operator $1_{H}$ on $H$ as follows:

For any $f,g\in C(\Sp(A)))$ and any $\alpha, \beta\in\mathbb{C}$ we have:
\begin{itemize}
\item
$\Phi(\alpha f+\beta g)=\alpha \Phi(f)+\beta \Phi(g);$
\item
$\Phi(fg)=\Phi(f)\Phi(g)$ and $ \Phi(\bar{f})=\Phi(f)^{*};$
\item
$\|\Phi(f)\|=\|f\|:=\sup_{t\in \Sp(A)}|f(t)|;$
\item
$\Phi(f_{0})=1_{H}$ and $\Phi(f_{1})=A,$ where $f_{0}(t)=1$ and $f_{1}(t)=t$, for $t\in \Sp(A)$.
\end{itemize}
with this notation we define
$$f(A)=\Phi(f) \ \text{for all} \ \ f\in C(\Sp(A))$$
 and we call it the continuous functional calculus for a self-adjoint operator $A$.
 
 If $A$ is a self-adjoint operator and $f$ is a real valued continuous function on $\Sp(A)$, then $f(t)\geq0$ for any $t\in \Sp(A)$ implies that $f(A)\geq0$, i.e. $f(A)$ is a positive operator on $H$. Moreover, if both $f$ and $g$ are real valued functions on $\Sp(A)$ then the following important property holds:
\begin{equation}\label{e3}
f(t)\geq g(t) \ \ \text{for any} \ \ t\in \Sp(A)\ \ \text{implies that} \ \ f(A)\geq g(A),
\end{equation}
in the operator order of $B(H)$, see \cite{zhu}.\\

Let $I$ be an interval in $\mathbb{R}$. Then $f:I\to\mathbb{R}$ is said to be convex (concave) function if 
$$f(\lambda a+(1-\lambda)b)\leq (\geq) \lambda f(a)+(1-\lambda) f(b)$$
for $a,b\in I$ and $\lambda\in [0,1]$.

The following Hermite-Hadamard inequality holds for any convex function $f$ defined on $\mathbb{R}$
\begin{equation}\label{ro}
(b-a)f\left(\frac{a+b}{2}\right)\leq \int_{a}^{b} f(x)dx\leq (b-a)\frac{f(a)+f(b)}{2}, \ \ a,b\in\mathbb{R}.
\end{equation}
It was firstly discovered by Hermite in 1881 in the journal Mathesis (see \cite{mit}). But this result was nowhere mentioned in the mathematical literature and was not widely known as Hermite's result \cite{pec}.

Beckenbach, a leading expert on the history and the theory of convex functions, wrote that this inequality was proven by
Hadamard in 1893 \cite{bec}. In 1974, Mitrinovi\v{c} found Hermite’s note in Mathesis \cite{mit}. Since (\ref{ro}) was known as Hadamard’s
inequality, the inequality is now commonly referred as the Hermite-Hadamard inequality \cite{pec}.

Throughout this paper, we prove some Hermite-Hadamard type inequalities for arithmetic-geometrically convex functions. Moreover, we introduce the concept of operator arithmetic-geometrically convex functions and prove some Hermite-Hadamard type inequalities for these class of functions. These results lead us to obtain some inequalities for trace functional of operators. Finally, we obtain some unitarily invariant norm inequalities for operators.
 \section{\textbf{Inequalities for arithmetic-geometrically convex functions}}
In this section, we prove some new inequalities for arithmetic-geometrically convex functions.

\begin{definition}
A continuous function $f:I\subset\mathbb{R}\to\mathbb{R}^{+}$ is said to be an AG-convex function (arithmetic-geometrically or $\log$ convex function) if 
\begin{equation}\label{ag}
f(\lambda a+(1-\lambda)b)\leq f(a)^{\lambda}f(b)^{1-\lambda}.
\end{equation}
for $a,b\in I$ and $\lambda\in[0,1]$, i.e., $\log f$ is convex.
\end{definition}
\begin{remark}\label{rtab}
The condition (\ref{ag}) can be written as
 \begin{equation}\label{tab}
 f\circ\log (\exp(\lambda a)\exp((1-\lambda)b))\leq (f\circ \log(\exp(a)))^{\lambda}(f\circ\log(\exp(b)))^{1-\lambda},
  \end{equation}
 then we observe that $f:I\to\mathbb{R}^{+}$ is AG-convex on $I$ if and only if $f\circ\log$ is GG-convex (geometrically or multiplicative convex) on $\exp(I):=\{\exp(x), x\in I\}$, by GG-convex we mean a continuous function $f:I\subset \mathbb{R}^{+}\to\mathbb{R}^{+}$ which satisfies in the following 
$$f(a^{\lambda}b^{1-\lambda})\leq f(a)^{\lambda}f(b)^{1-\lambda}$$
for $a,b\in I$ and $\lambda\in[0,1]$ (see \cite{nic}).\\
Note if $I=[a,b]$ then $\exp(I)=[\exp(a),\exp(b)]$.
 \end{remark}

The author of  \cite[p. 158]{nic} showed that every polynomial $P(x)$ with non-negative coefficients is a GG-convex function on $[0,\infty)$. More generally, every real analytic function $f(x)=\sum_{n=0}^{\infty}c_{n}x^{n}$ with non-negative coefficients is a GG-convex function on $(0,R)$ where $R$ denotes the radius of convergence.  This gives some different examples of GG-convex function. It is easy to show that $\exp(x)$ is a GG-convex function.
\begin{remark}
It is well-known that for positive numbers $a$ and $b$
$$\min\{a,b\}\leq G(a,b)=\sqrt{ab}\leq L(a,b)=\frac{b-a}{\ln b-\ln a}\leq A(a,b)=\frac{a+b}{2}\leq \max\{a,b\}.$$
\end{remark}

We proved in \cite[Theorem 3]{taghavi2}  the following inequalities for GG-convex functions

\begin{eqnarray}\label{mes}
f(\sqrt{ab})&\leq& \sqrt{\left(f(a^{\frac{3}{4}}b^{\frac{1}{4}})f(a^{\frac{1}{4}}b^{\frac{3}{4}})\right)}\nonumber\\
&\leq& \exp\left(\frac{1}{\log b-\log a}\int_{a}^{b}\frac{\log\circ f(t)}{t}dt\right)\nonumber\\
&\leq&\sqrt{f(\sqrt{ab})}.\sqrt[4]{f(a)}.\sqrt[4]{f(b)}\nonumber\\
&\leq& \sqrt{f(a)f(b)}.
\end{eqnarray}

\begin{theorem}
Let $f$ be an AG-convex function defined on $[a,b]$. Then, we have
\begin{eqnarray}\label{recall}
f\left(\frac{a+b}{2}\right)&\leq& \sqrt{f\left(\frac{3a+b}{4}\right)f\left(\frac{a+3b}{4}\right)}\nonumber\\
&\leq& \exp\left(\frac{1}{b-a}\int_{a}^{b}\log\circ f(u)du\right)\nonumber\\
&\leq&\sqrt{f\left(\frac{a+b}{2}\right)}.\sqrt[4]{f(a)}.\sqrt[4]{f(b)}\nonumber\\
&\leq& \sqrt{f(a)f(b)},
\end{eqnarray}
where $u=\log t$.
\end{theorem}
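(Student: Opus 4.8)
The plan is to transport the geometric–geometric inequalities already established in (\ref{mes}) to the arithmetic–geometric setting by means of the logarithmic change of variables recorded in Remark \ref{rtab}. Concretely, since $f$ is AG-convex on $[a,b]$, the composition $g:=f\circ\log$ is GG-convex on $\exp([a,b])=[\exp(a),\exp(b)]$. I would therefore apply the entire chain (\ref{mes}) to $g$ on the interval with endpoints $\exp(a)$ and $\exp(b)$, and then rewrite each of the five resulting quantities back in terms of $f$.

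First I would verify the endpoint and midpoint translations. Writing $A=\exp(a)$ and $B=\exp(b)$, one has $\sqrt{AB}=\exp\!\left(\frac{a+b}{2}\right)$, so $g(\sqrt{AB})=f\!\left(\frac{a+b}{2}\right)$; likewise $A^{3/4}B^{1/4}=\exp\!\left(\frac{3a+b}{4}\right)$ and $A^{1/4}B^{3/4}=\exp\!\left(\frac{a+3b}{4}\right)$, which convert the second term of (\ref{mes}) into $\sqrt{f\!\left(\frac{3a+b}{4}\right)f\!\left(\frac{a+3b}{4}\right)}$. The two rightmost terms are handled identically via $g(A)=f(a)$ and $g(B)=f(b)$, turning them into $\sqrt{f\!\left(\frac{a+b}{2}\right)}\,\sqrt[4]{f(a)}\,\sqrt[4]{f(b)}$ and $\sqrt{f(a)f(b)}$, respectively.

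The one step requiring actual computation is the central integral term. Here $\log B-\log A=b-a$, and in the integrand the substitution $u=\log t$, $\d u=\d t/t$, sends the limits $A,B$ to $a,b$ and yields
$$\int_{A}^{B}\frac{\log\circ g(t)}{t}\,\d t=\int_{a}^{b}\log\circ f(u)\,\d u,$$
so that $\exp\!\left(\frac{1}{\log B-\log A}\int_{A}^{B}\frac{\log\circ g(t)}{t}\,\d t\right)=\exp\!\left(\frac{1}{b-a}\int_{a}^{b}\log\circ f(u)\,\d u\right)$, which is exactly the middle term of (\ref{recall}) and explains the note $u=\log t$ in the statement. Assembling these five identifications term by term reproduces the claimed chain (\ref{recall}) directly from (\ref{mes}).

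I do not expect a genuine obstacle: the result is essentially a consequence of (\ref{mes}) under the bijection $t\mapsto\log t$, and the only place to be careful is the bookkeeping—ensuring that the exponents $3/4$ and $1/4$ map to the correct affine combinations of $a$ and $b$, and that the logarithmic measure $\d t/t$ appearing in the GG-integral becomes ordinary Lebesgue measure $\d u$ after the change of variables.
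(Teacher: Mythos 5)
Your proposal is correct and follows exactly the paper's own argument: both apply Remark \ref{rtab} to conclude that $f\circ\log$ is GG-convex on $[\exp(a),\exp(b)]$, invoke the chain (\ref{mes}) for that function, and translate each term back via $\log\circ\exp=\mathrm{id}$ together with the substitution $u=\log t$. Your write-up is in fact slightly more careful than the paper's, since you make the change of variables in the integral explicit rather than leaving it implicit.
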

\begin{proof}
Let $f:[a,b]\to \mathbb{R}^{+}$ be an AG-convex function, then by  Remark \ref{rtab},  $f\circ\log$ is GG-convex on $[\exp(a),\exp(b)]$.
Applying (\ref{mes}), we obtain
\begin{small}
\begin{eqnarray*}
f\circ\log(\sqrt{\exp(a)\exp(b)})&\leq& \sqrt{f\circ\log\left(\exp(a)^{\frac{3}{4}}\exp(b)^{\frac{1}{4}}\right)f\circ\log\left(\exp(a)^{\frac{1}{4}}\exp(b)^{\frac{3}{4}}\right)}\\
&\leq& \exp\left(\frac{1}{\log\exp(b)-\log\exp(a)}\int_{\exp(a)}^{\exp(b)}\frac{\log\circ f\circ\log(t)}{t}dt\right)\\
&\leq&\sqrt{f\circ\log\sqrt{\exp(a)\exp(b)}}.\sqrt[4]{f\circ\log\exp(a)}.\sqrt[4]{f\circ\log\exp(b)}\\
&\leq& \sqrt{f\circ\log\exp(a)f\circ\log\exp(b)}.
\end{eqnarray*}
\end{small}
Hence, we can write
\begin{small}
\begin{eqnarray*}
f\circ\log\left(\exp\left(\frac{a+b}{2}\right)\right)&\leq& \sqrt{\left(f\circ\log\exp\left(\frac{3a+b}{4}\right)\right)\left(f\circ\log\exp\left(\frac{a+3b}{4}\right)\right)}\\
&\leq& \exp\left(\frac{1}{\log\exp(b)-\log\exp(a)}\int_{\exp(a)}^{\exp(b)}\frac{\log\circ f\circ\log(t)}{t}dt\right)\\
&\leq&\sqrt{f\circ\log\exp\left(\frac{a+b}{2}\right)}.\sqrt[4]{f\circ\log\exp(a)}.\sqrt[4]{f\circ\log\exp(b)}\\
&\leq& \sqrt{f\circ\log\exp(a)f\circ\log\exp(b)}.
\end{eqnarray*}
\end{small}
It follows that

\begin{eqnarray*}
f\left(\frac{a+b}{2}\right)&\leq& \sqrt{f\left(\frac{3a+b}{4}\right)f\left(\frac{a+3b}{4}\right)}\\
&\leq& \exp\left(\frac{1}{b-a}\int_{a}^{b}\log\circ f(u)du\right)\\
&\leq&\sqrt{f\left(\frac{a+b}{2}\right)}.\sqrt[4]{f(a)}.\sqrt[4]{f(b)}\\
&\leq& \sqrt{f(a)f(b)},
\end{eqnarray*}
where $u=\log t$.
\end{proof}

\section{\textbf{Inequalities for operator arithmetic-geometrically convex functions}}\label{gg}

In this section, we prove some Hermite-Hadamard type inequalities for operator arithmetic-geometrically convex function. \\

Here we recall some preliminaries of operator convexity.
\begin{definition}
A real valued continuous function $f$ on an interval $I$ is said to be operator convex if 
$$f(\lambda A+(1-\lambda)B)\leq \lambda f(A)+(1-\lambda)f(B),$$
in the operator order, for all $\lambda\in[0,1]$ and self-adjoint operators $A$ and $B$ in $B(H)$ whose spectra are contained in $I$.
\end{definition}
In \cite{dra3} Dragomir investigated the operator version of the Hermite-Hadamard inequality for
operator convex functions. Let $f:I\to\mathbb{R}$ be an operator convex function on the interval $I$ then, for any self-adjoint operators $A$ and $B$ with spectra in $I$, the following inequalities holds
\begin{eqnarray*}
f\left(\frac{A+B}{2}\right)&\leq&2\int_{\frac{1}{4}}^{\frac{3}{4}} f(tA+(1-t)B)dt\\
&\leq& \frac{1}{2}\left[f\left(\frac{3A+B}{4}\right)+f\left(\frac{A+3B}{4}\right)\right]\\
&\leq& \int_{0}^{1}f\left((1-t)A+tB\right)dt\nonumber\\
&\leq&\frac{1}{2}\left[f\left(\frac{A+B}{2}\right)+\frac{f(A)+f(B)}{2}\right]\\
&\leq& \frac{f(A)+f(B)}{2},
\end{eqnarray*} 
for the first inequality in above, see \cite{taghavi}.

In \cite{taghavi2}, we presented the concept of an operator GG-convex function as follows:
\begin{definition}
A continuous function $f:I\subseteq \mathbb{R}^{+}\to\mathbb{R}^{+}$ is said to be operator GG-convex (geometrically convex)  if 
$$f(A^{\lambda}B^{1-\lambda})\leq f(A)^{\lambda}f(B)^{1-\lambda}$$
for $\lambda\in[0,1]$ and  $A, B\in \mathcal{A}^{+}$ such that $\Sp (A)$, $\Sp(B)\subseteq I$.
\end{definition}

Also, we obtained the following Hermite-Hadamard type inequality for the operator GG-convex function
\begin{equation*}
\log f(\sqrt{AB})\leq \int_{0}^{1}\log f(A^{t}B^{1-t})dt\leq \log \sqrt{f(A)f(B)}
\end{equation*}
for $0\leq t\leq 1$ and $A,B\in \mathcal{A}^{+}$ such that $\Sp (A), \Sp(B)\subseteq I$.

\begin{definition}
A continuous function $f:I\subseteq \mathbb{R}\to\mathbb{R}^{+}$ is said to be  operator AG-convex (concave) if 
$$f(\lambda A+(1-\lambda) B)\leq \  (\geq)  \ f(A)^{\lambda}f(B)^{1-\lambda}$$
for $0\leq\lambda\leq1$ and self-adjoint operators $A$ and $B$ in $B(H)$ whose spectra are contained in $I$.
\end{definition}
\begin{example}\cite[Corollary 7.6.8]{hor}
Let $A$ and $B$ be to positive definite $n\times n$ complex matrices. For $0<\alpha<1$, we have
\begin{equation}\label{det}
|\alpha A+(1-\alpha)B|\geq |A|^{\alpha}|B|^{1-\alpha}
\end{equation}
where $|\cdot|$ denotes determinant of a matrix.
\end{example}
Let $f$ be an operator AG-convex function, for $A$ and $B$ in $\mathcal{A}^{+}$ whose spectra are contained in $I$, then we have
\begin{eqnarray*}
f\left(\frac{A+B}{2}\right)&=& f\left(\frac{\alpha A+(1-\alpha)B}{2}+\frac{(1-\alpha)A+\alpha B}{2}\right)\\
&\leq&\sqrt{f(\alpha A+(1-\alpha)B) f((1-\alpha)A+\alpha B)}\\
&\leq&\sqrt{f(A)^{\alpha}f(B)^{1-\alpha} f(A)^{1-\alpha}f(B)^{\alpha}}\\
&=&\sqrt{f(A)f(B)}.
\end{eqnarray*}
Therefore,
$$f\left(\frac{A+B}{2}\right)\leq\sqrt{f(\alpha A+(1-\alpha)B) f((1-\alpha)A+\alpha B)}\leq \sqrt{f(A)f(B)}.$$
Integrate the above inequality over $[0,1]$, we can write the following
\begin{eqnarray*}
\int_{0}^{1}f\left(\frac{A+B}{2}\right)d\alpha &\leq& \int_{0}^{1}\sqrt{f(\alpha A+(1-\alpha)B) f((1-\alpha)A+\alpha B)}d\alpha\\
&\leq&\int_{0}^{1}\sqrt{f(A)f(B)}d\alpha.
\end{eqnarray*}
So, 
\begin{eqnarray}
f\left(\frac{A+B}{2}\right) &\leq& \int_{0}^{1}\sqrt{f(\alpha A+(1-\alpha)B) f((1-\alpha)A+\alpha B)}d\alpha\nonumber\\
&\leq&\sqrt{f(A)f(B)},\label{jv1}
\end{eqnarray}
for $A$ and $B$ in $\mathcal{A}^{+}$.

\begin{note} We know that $f(t)=t^{\alpha}$ is a concave function for $\alpha\in[0,1]$. Then for positive operator $T\in B(H)$
\begin{eqnarray*}
\|T^{\alpha}x\|&=&\langle T^{\alpha}x,T^{\alpha}x\rangle^{\frac{1}{2}}\\
&=&\langle T^{2\alpha}x,x\rangle^{\frac{1}{2}}\\
&\leq&\langle T^{2}x,x\rangle^{\frac{\alpha}{2}} \ \ \ \ \text{by \cite[Problem IX.8.14]{bha} for concave function}\\
&=&\|Tx\|^{\alpha}.
\end{eqnarray*} 
Taking $\sup$ of the above inequality for $x\in H$ such that $\|x\|=1$, we obtain
\begin{equation}\label{conc}
\|T^{\alpha}\|\leq\|T\|^{\alpha}.
\end{equation}
\end{note}
In order to establish Hermite-Hadamard type inequality for operator AG-convex function, we need the following lemmas. 
\begin{lemma}\cite[Lemma 3]{nag}\label{com}
Let $A$ and $B$ be two operators in $\mathcal{A}^{+}$, and $f$ a continuous function on $\Sp(A)$. Then, $AB=BA$ implies that $f(A)B=Bf(A)$.
\end{lemma}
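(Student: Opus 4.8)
The plan is to reduce the assertion to the case of polynomials and then transfer it to arbitrary continuous $f$ by a uniform-approximation argument, using the isometry of the Gelfand map recorded in the preliminaries.

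First I would settle the polynomial case. Starting from the hypothesis $AB=BA$, a direct induction on $n$ yields $A^{n}B=BA^{n}$ for every nonnegative integer $n$: indeed $A^{n}B=A^{n-1}(AB)=A^{n-1}(BA)=(A^{n-1}B)A=(BA^{n-1})A=BA^{n}$, the inductive hypothesis being used in the penultimate step. By linearity this gives $p(A)B=Bp(A)$ for every polynomial $p$. Since $A$ is positive, hence self-adjoint, its spectrum $\Sp(A)$ is a compact subset of $\mathbb{R}$, so the Weierstrass approximation theorem supplies a sequence of polynomials $(p_{k})$ with $\|p_{k}-f\|=\sup_{t\in\Sp(A)}|p_{k}(t)-f(t)|\to 0$.

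Next I would pass from polynomials to $f$ using the continuous functional calculus. Because the Gelfand map $\Phi$ is a $\ast$-isometric isomorphism, we have $\|p_{k}(A)-f(A)\|=\|\Phi(p_{k}-f)\|=\|p_{k}-f\|\to 0$, so $p_{k}(A)\to f(A)$ in operator norm. Since operator multiplication is norm-continuous (and $B$ is fixed), letting $k\to\infty$ in the identities $p_{k}(A)B=Bp_{k}(A)$ yields $f(A)B=Bf(A)$, which is the claim.

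There is no genuine obstacle here; the only point that must be handled correctly is the interplay between the density of polynomials in $C(\Sp(A))$ and the norm-continuity of $g\mapsto g(A)$ guaranteed by the isometry $\|\Phi(g)\|=\|g\|$. Note that positivity of $B$ plays no role beyond ensuring $B\in\mathcal{A}$; the argument works for any bounded $B$ commuting with $A$, and it uses only that $\Sp(A)$ is compact, which holds for any bounded self-adjoint operator.
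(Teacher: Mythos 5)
Your argument is correct and complete. Note, however, that the paper does not prove this lemma at all: it is imported verbatim as \cite[Lemma 3]{nag}, so there is no in-paper proof to compare against. What you supply is the standard (and essentially canonical) argument: induction gives $A^{n}B=BA^{n}$, linearity gives $p(A)B=Bp(A)$ for polynomials, and the Weierstrass theorem together with the isometry $\|\Phi(g)\|=\sup_{t\in\Sp(A)}|g(t)|$ of the continuous functional calculus lets you pass to the norm limit, using norm-continuity of multiplication by the fixed operator $B$. Your closing remarks are also accurate: positivity of $B$ is irrelevant, and only self-adjointness (hence real, compact spectrum) of $A$ is used, so the statement holds for any bounded $B$ commuting with a self-adjoint $A$. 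No gaps.
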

\noindent Since $f(t)=t^{\lambda}$ is a continuous function for $\lambda\in [0,1]$  and $\mathcal{A}$ is a commutative $C^{*}$-algebra, we have
$A^{\lambda}B=BA^{\lambda}$. Moreover, by applying the above lemma for $f(t)=t^{1-\lambda}$ again, we have
$A^{\lambda}B^{1-\lambda}=B^{1-\lambda}A^{\lambda}$, for  operators $A$ and $B$ in $\mathcal{A}^{+}$. It means that $A^{\lambda}$ and $B^{1-\lambda}$ commute together whenever $A$ and $B$ commute.
\begin{lemma}\label{gconvex}\cite[Lemma 3]{taghavi2}
Let $A$ and $B$ be two operators in $\mathcal{A}^{+}$. Then $$\{A^{\lambda}B^{1-\lambda} : 0\leq \lambda\leq 1\}$$ is convex.
\end{lemma}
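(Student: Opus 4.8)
The plan is to lean on commutativity throughout. Since $\mathcal{A}$ is commutative, $A$ and $B$ commute, so by Lemma~\ref{com} (exactly as in the discussion following it) $A^{\lambda}$ and $B^{1-\lambda}$ commute; consequently any two members of the set commute, and for commuting operators $U,V\in\mathcal{A}^{+}$ the continuous functional calculus gives the power law $(UV)^{s}=U^{s}V^{s}$ for $s\ge 0$. These are the only structural facts I expect to need.

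First I would simplify the ambient picture. Setting $C:=AB^{-1}\in\mathcal{A}^{+}$ (strictly positive because $B$ is), the power law gives $A^{\lambda}B^{1-\lambda}=B\,(AB^{-1})^{\lambda}=BC^{\lambda}$, so the set in question is $\{BC^{\lambda}:0\le\lambda\le1\}$. As $B$ is a fixed positive invertible element of the commutative algebra, $X\mapsto BX$ is a linear bijection of the self-adjoint part of $\mathcal{A}$ onto itself, hence preserves convexity in both directions; it therefore suffices to prove that $\{C^{\lambda}:0\le\lambda\le1\}$ is convex. Passing to the Gelfand representation $\mathcal{A}\cong C(X)$ turns $C$ into a strictly positive continuous function $c$ and reduces everything to convexity of the family $\{c^{\lambda}:0\le\lambda\le1\}\subset C(X)$.

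The decisive step is then to take $\lambda_{1},\lambda_{2}\in[0,1]$ and a weight $t\in[0,1]$ and to produce a single $\mu\in[0,1]$ with $t\,C^{\lambda_{1}}+(1-t)\,C^{\lambda_{2}}=C^{\mu}$. At each spectral value $s>0$ the scalar $t\,s^{\lambda_{1}}+(1-t)\,s^{\lambda_{2}}$ lies between $s^{\lambda_{1}}$ and $s^{\lambda_{2}}$, hence equals $s^{\mu(s)}$ for some exponent $\mu(s)$ between $\lambda_{1}$ and $\lambda_{2}$; the main obstacle is to show that this fitted exponent is independent of $s$, so that one operator $C^{\mu}$ represents the whole combination and it lands back in the original set as $A^{\mu}B^{1-\mu}$. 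I would attack this by factoring out $C^{\lambda_{1}}$ to reduce to the case $\lambda_{1}=0$ and then analysing how $\mu(s)$ can vary with $s$. It is worth recording that if the weighted geometric mean $\bigl(C^{\lambda_{1}}\bigr)^{t}\bigl(C^{\lambda_{2}}\bigr)^{1-t}$ is used in place of the linear combination---the natural substitute for a convex combination in the multiplicative setting underlying AG- and GG-convexity---then the power law collapses it at once to $C^{\mu}$ with $\mu=t\lambda_{1}+(1-t)\lambda_{2}\in[0,1]$, so closure is immediate; this same $\mu$ is the natural candidate in the linear problem, and establishing its spectral independence is precisely the point on which the whole argument turns.
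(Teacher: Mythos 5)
The paper contains no proof of this lemma --- it is imported verbatim as \cite[Lemma 3]{taghavi2} --- so there is no internal argument to measure yours against; I can only assess the proposal itself. Your reductions are all sound: commutativity gives $A^{\lambda}B^{1-\lambda}=BC^{\lambda}$ with $C=AB^{-1}\in\mathcal{A}^{+}$, multiplication by the invertible positive $B$ preserves convexity, and the Gelfand transform converts the question into whether $\{c^{\lambda}:0\le\lambda\le1\}$ is convex in $C(X)$ for a strictly positive continuous $c$. But you stop exactly at the decisive point: you never show that the pointwise fitted exponent $\mu(s)$ determined by $t\,s^{\lambda_{1}}+(1-t)\,s^{\lambda_{2}}=s^{\mu(s)}$ is independent of $s$. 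This is not a technical loose end that further analysis would close; it is false whenever $\Sp(C)$ contains two points different from $1$. Concretely, let $\mathcal{A}=C([1,2])$ act by multiplication on $L^{2}([1,2])$, take $A=M_{x}$, $B=I$, $\lambda_{1}=0$, $\lambda_{2}=1$, $t=\tfrac{1}{2}$: the equation $\tfrac{1}{2}(1+x)=x^{\mu}$ forces $\mu=\ln\bigl(\tfrac{1+x}{2}\bigr)/\ln x$, which equals $\log_{2}\tfrac{3}{2}\approx 0.585$ at $x=2$ but tends to $\tfrac{1}{2}$ as $x\to 1^{+}$. Hence $\tfrac{1}{2}(I+A)$ is not of the form $A^{\mu}B^{1-\mu}$ for any single $\mu$, and the set is not convex. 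So the gap in your proposal is unfillable: the statement itself fails except in degenerate cases (essentially when $AB^{-1}$ is a scalar off the fixed-point $1$ of its spectrum).

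Your closing observation is the salvageable content. The set is closed under weighted \emph{geometric} means, since $(A^{\lambda_{1}}B^{1-\lambda_{1}})^{t}(A^{\lambda_{2}}B^{1-\lambda_{2}})^{1-t}=A^{\mu}B^{1-\mu}$ with $\mu=t\lambda_{1}+(1-t)\lambda_{2}$, and this multiplicative convexity --- together with the spectral inclusion $\Sp(A^{\lambda}B^{1-\lambda})\subseteq I$, which the paper establishes separately --- is all that the surrounding arguments actually use. In the additive direction the most one gets from the arithmetic--geometric mean inequality is the operator inequality $t\,A^{\lambda_{1}}B^{1-\lambda_{1}}+(1-t)\,A^{\lambda_{2}}B^{1-\lambda_{2}}\ge A^{\mu}B^{1-\mu}$, which is an inequality, not membership of the convex combination in the set.
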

\begin{lemma}\cite[Theorem 5.3]{zhu}
Let $A$ and $B$ be in a Banach algebra such that $AB=BA$. Then 
$$\Sp(AB)\subset\Sp (A)\Sp (B).$$
 \end{lemma}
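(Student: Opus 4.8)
The plan is to reduce the statement to a Gelfand-theory computation in a cleverly chosen commutative subalgebra. Throughout write the product set as $\Sp(A)\Sp(B)=\{\lambda\mu:\lambda\in\Sp(A),\ \mu\in\Sp(B)\}$, and assume the Banach algebra is unital (otherwise adjoin a unit, which does not affect the spectra). Since $AB=BA$, the elements $A$, $B$ and the unit lie in a single commutative subalgebra, and a routine Zorn's lemma argument lets me enlarge it to a \emph{maximal} commutative (closed, unital) subalgebra $\mathcal{C}$. The reason for insisting on maximality, rather than working in the closed subalgebra generated by $A$, $B$ and the unit, is that $\mathcal{C}$ is \emph{spectrally invariant}, and this is exactly what makes the argument go through.

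First I would establish this spectral permanence: for every $x\in\mathcal{C}$ one has $\Sp_{\mathcal{C}}(x)=\Sp(x)$. The only nontrivial inclusion is that an $x\in\mathcal{C}$ which is invertible in the ambient algebra is already invertible inside $\mathcal{C}$. Indeed, if $x^{-1}$ exists in the full algebra, then $x^{-1}$ commutes with every element that commutes with $x$; in particular it commutes with all of $\mathcal{C}$, so $\mathcal{C}\cup\{x^{-1}\}$ generates a commutative subalgebra containing $\mathcal{C}$, and maximality forces $x^{-1}\in\mathcal{C}$. Hence the resolvent sets, and therefore the spectra, computed in $\mathcal{C}$ and in the full algebra coincide.

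Next I would invoke the Gelfand representation for the commutative unital Banach algebra $\mathcal{C}$: its maximal ideal space $\Omega(\mathcal{C})$ consists of the nonzero multiplicative linear functionals $\phi$, and for each $x\in\mathcal{C}$ the spectrum equals the range of the Gelfand transform,
$$\Sp_{\mathcal{C}}(x)=\{\phi(x):\phi\in\Omega(\mathcal{C})\}.$$
For any $\phi\in\Omega(\mathcal{C})$, multiplicativity gives $\phi(AB)=\phi(A)\phi(B)$, while the permanence step yields $\phi(A)\in\Sp_{\mathcal{C}}(A)=\Sp(A)$ and $\phi(B)\in\Sp_{\mathcal{C}}(B)=\Sp(B)$. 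Thus every value $\phi(AB)$ is a product of an element of $\Sp(A)$ with an element of $\Sp(B)$, and combining the displays gives
$$\Sp(AB)=\Sp_{\mathcal{C}}(AB)=\{\phi(A)\phi(B):\phi\in\Omega(\mathcal{C})\}\subseteq\Sp(A)\Sp(B),$$
which is the desired conclusion.

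The main obstacle is precisely the spectral-permanence step. Without the equality $\Sp_{\mathcal{C}}(x)=\Sp(x)$, the characters would only certify $\phi(A)\in\Sp_{\mathcal{C}}(A)$, and passing to a subalgebra can only enlarge the spectrum (it may ``fill in holes''), so $\Sp_{\mathcal{C}}(A)$ could be strictly larger than $\Sp(A)$; the inclusion into $\Sp(A)\Sp(B)$ would then fail. It is the maximality of $\mathcal{C}$, via the inverse-closedness argument above, that pins the two spectra together and makes the characters land in the correct sets. An alternative, more computational route would avoid Gelfand theory altogether: assuming $\gamma\notin\Sp(A)\Sp(B)$, show directly that $\gamma\cdot 1-AB$ is invertible, handling the case $\gamma=0$ (where $0\notin\Sp(A)$ and $0\notin\Sp(B)$, so both $A$ and $B$, and hence $AB$, are invertible) separately from $\gamma\neq 0$; this works but is messier, and I would prefer the representation-theoretic argument sketched above.
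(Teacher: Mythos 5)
Your proof is correct: the passage to a maximal commutative subalgebra, the inverse-closedness argument giving spectral permanence, and the identification of the spectrum with the range of the Gelfand transform are all sound, and together they yield $\Sp(AB)=\{\phi(A)\phi(B):\phi\in\Omega(\mathcal{C})\}\subseteq\Sp(A)\Sp(B)$ exactly as claimed. The paper itself offers no proof of this lemma --- it is simply quoted from Zhu's book --- and your argument is the standard one given there, so there is nothing to contrast.
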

 Let $A$ and $B$ be two positive operators in $\mathcal{A}$ with spectra in $I$. Now, Lemma \ref{com} and functional calculus \cite[Theorem 10.3 (c)]{zhu} imply that
 \begin{equation*}
 \Sp(A^{\lambda}B^{1-\lambda})\subset \Sp(A^{\lambda})\Sp(B^{1-\lambda})=\Sp(A)^{\lambda}\Sp(B)^{1-\lambda}\subseteq I
 \end{equation*}
 for $0\leq\lambda\leq1$.

\begin{lemma}\label{lemn}
Let $f$ be an operator GG-convex function. Then
$\varphi_{A,B}(t)=\| f(A^{t}B^{1-t})\|$ is an AG-convex function for $0\leq t\leq 1$ and $A,B\in \mathcal{A}^{+}$ such that $\Sp (A), \Sp(B)\subseteq I$.
\end{lemma}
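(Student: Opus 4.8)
The plan is to verify the defining inequality of AG-convexity directly: for $s,t\in[0,1]$ and $\lambda\in[0,1]$ I must show
$$\varphi_{A,B}(\lambda s+(1-\lambda)t)\leq \varphi_{A,B}(s)^{\lambda}\varphi_{A,B}(t)^{1-\lambda}.$$
First I would rewrite the operator appearing on the left. Setting $C=A^{s}B^{1-s}$ and $D=A^{t}B^{1-t}$, and using that $A$ and $B$ commute (as $\mathcal{A}$ is commutative) together with the remark following Lemma \ref{com} that the powers $A^{\mu}$ and $B^{\nu}$ commute, I would establish the algebraic identity
$$A^{\lambda s+(1-\lambda)t}B^{1-\lambda s-(1-\lambda)t}=C^{\lambda}D^{1-\lambda},$$
which follows by regrouping the commuting factors $A^{\lambda s}A^{(1-\lambda)t}B^{\lambda(1-s)}B^{(1-\lambda)(1-t)}$. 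By the spectrum lemma preceding the statement, $C$ and $D$ are positive operators with spectra in $I$, so the functional calculus below is legitimate.

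Next I would invoke the operator GG-convexity of $f$. Since $C$ and $D$ commute (again by Lemma \ref{com} and commutativity of $\mathcal{A}$), the definition of operator GG-convexity applied to $C^{\lambda}D^{1-\lambda}$ gives, in the operator order,
$$f\!\left(A^{\lambda s+(1-\lambda)t}B^{1-\lambda s-(1-\lambda)t}\right)=f(C^{\lambda}D^{1-\lambda})\leq f(C)^{\lambda}f(D)^{1-\lambda}.$$
Both sides are positive operators (the right-hand side because $f(C)$ and $f(D)$ are positive and commute), so passing to the norm and using that $0\leq X\leq Y$ implies $\|X\|\leq\|Y\|$ for positive operators yields
$$\varphi_{A,B}(\lambda s+(1-\lambda)t)\leq\bigl\|f(C)^{\lambda}f(D)^{1-\lambda}\bigr\|.$$

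The final step, which I expect to be the main point, is to split this norm. Since $f(C)$ and $f(D)$ commute, submultiplicativity of the norm gives $\bigl\|f(C)^{\lambda}f(D)^{1-\lambda}\bigr\|\leq\bigl\|f(C)^{\lambda}\bigr\|\,\bigl\|f(D)^{1-\lambda}\bigr\|$, and then inequality (\ref{conc}) from the Note, applied to the positive operators $f(C)$ and $f(D)$ with exponents $\lambda,1-\lambda\in[0,1]$, gives $\bigl\|f(C)^{\lambda}\bigr\|\leq\|f(C)\|^{\lambda}$ and $\bigl\|f(D)^{1-\lambda}\bigr\|\leq\|f(D)\|^{1-\lambda}$. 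Recalling that $\|f(C)\|=\varphi_{A,B}(s)$ and $\|f(D)\|=\varphi_{A,B}(t)$, these combine to the desired AG-convexity inequality. The only subtlety to watch is that each passage from operators to scalars rests on positivity and commutativity, both guaranteed by the standing hypothesis that $\mathcal{A}$ is commutative.
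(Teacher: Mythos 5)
Your proposal is correct and follows essentially the same route as the paper: the same regrouping of commuting powers to write the operator as $(A^{s}B^{1-s})^{\lambda}(A^{t}B^{1-t})^{1-\lambda}$, the same appeal to operator GG-convexity, and the same combination of norm submultiplicativity with inequality (\ref{conc}). The only difference is that you make explicit the justification (positivity and monotonicity of the norm on the operator order) for passing from the operator inequality to the norm inequality, a step the paper leaves implicit.
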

\begin{proof}
We should prove that $$\varphi_{A,B}(\alpha u+(1-\alpha)v)\leq \varphi_{A,B}(u)^{\alpha}\varphi_{A,B}(v)^{1-\alpha}$$ for $\alpha\in [0,1]$ and $u,v\in [0,1]$.
\begin{eqnarray*}
\varphi_{A,B}(\alpha u+(1-\alpha)v)&=&\| f(A^{\alpha u+(1-\alpha)v}B^{1-(\alpha u+(1-\alpha)v)})\|\\
&=& \| f(A^{\alpha u+(1-\alpha)v}B^{\alpha(1- u)+(1-\alpha)(1-v)})\|\\
&=&\| f(A^{\alpha u}B^{\alpha(1- u)}A^{(1-\alpha)v}B^{(1-\alpha)(1-v)})\|\\
&=&\| f((A^{u}B^{1- u})^{\alpha}(A^{v}B^{1-v})^{1-\alpha})\|\\
&\leq&\| f(A^{u}B^{1- u})^{\alpha}f(A^{v}B^{1-v})^{1-\alpha}\| \ \ \ \ \ \ \text{GG-convexity of $f$}\\
&\leq&\| f(A^{u}B^{1- u})^{\alpha}\|\| f(A^{v}B^{1-v})^{1-\alpha}\| \\
&\leq&\| f(A^{u}B^{1- u})\|^{\alpha}\| f(A^{v}B^{1-v})\|^{1-\alpha} \ \ \ \ \  \text{by (\ref{conc})} \\
&=&\varphi_{A,B}(u)^{\alpha}\varphi_{A,B}(v)^{1-\alpha},
\end{eqnarray*}
for $A,B\in \mathcal{A}^{+}$ such that $\Sp (A), \Sp(B)\subseteq I$.
\end{proof}

Here, we recall inequalities (\ref{recall}) for AG-convex function $\varphi$
\begin{eqnarray}\label{var}
\varphi\left(\frac{a+b}{2}\right)&\leq& \sqrt{\varphi\left(\frac{3a+b}{4}\right)\varphi\left(\frac{a+3b}{4}\right)}\nonumber\\
&\leq& \exp\left(\frac{1}{b-a}\int_{a}^{b}\log\circ \varphi(u)du\right)\nonumber\\
&\leq&\sqrt{\varphi\left(\frac{a+b}{2}\right)}.\sqrt[4]{\varphi(a)}.\sqrt[4]{\varphi(b)}\nonumber\\
&\leq& \sqrt{\varphi(a)\varphi(b)}.
\end{eqnarray}
\begin{theorem}
Let $f$ be an operator GG-convex function.
Then
\begin{eqnarray}\label{mainag}
\|f(\sqrt{AB})\|&\leq& \sqrt{\|f(A^{\frac{1}{4}}B^{\frac{3}{4}})x\|f(A^{\frac{3}{4}}B^{\frac{1}{4}})\|}\nonumber\\
&\leq& \exp\left(\int_{0}^{1}\log \|f(A^{u}B^{1-u})\|du\right)\nonumber\\
&\leq&\sqrt{\|f(A^{\frac{1}{2}}B^{\frac{1}{2}})\|}.\sqrt[4]{\|f(B)\|}.\sqrt[4]{\|f(A)\|}\nonumber\\
&\leq& \sqrt{\|f(A)\|\|f(B)\|},
\end{eqnarray}
for $0\leq u\leq 1$ and $A,B\in \mathcal{A}^{+}$ such that $\Sp (A), \Sp(B)\subseteq I$.
\end{theorem}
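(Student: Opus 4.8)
The plan is to reduce the operator inequality chain (\ref{mainag}) entirely to the scalar Hermite-Hadamard inequalities (\ref{var}) already established for AG-convex functions. The bridge is Lemma \ref{lemn}, which guarantees that the scalar function $\varphi_{A,B}(t)=\|f(A^{t}B^{1-t})\|$ is AG-convex on $[0,1]$ whenever $f$ is operator GG-convex. Accordingly, I would begin by setting $\varphi=\varphi_{A,B}$ and invoking Lemma \ref{lemn} to place $\varphi$ in precisely the class to which (\ref{var}) applies.

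Next I would specialize (\ref{var}) to the endpoints $a=0$, $b=1$ and simply read off each term. The midpoint yields $\varphi(1/2)=\|f(A^{1/2}B^{1/2})\|$; the quarter points yield $\varphi(1/4)=\|f(A^{1/4}B^{3/4})\|$ and $\varphi(3/4)=\|f(A^{3/4}B^{1/4})\|$; the endpoints yield $\varphi(0)=\|f(A^{0}B)\|=\|f(B)\|$ and $\varphi(1)=\|f(AB^{0})\|=\|f(A)\|$ since $A^{0}=B^{0}=1_{H}$; and the logarithmic mean term becomes $\exp\left(\int_{0}^{1}\log\|f(A^{u}B^{1-u})\|\,du\right)$ because $b-a=1$. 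Substituting these five evaluations into the four inequalities of (\ref{var}) reproduces the chain (\ref{mainag}) line by line.

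The one point that genuinely needs explicit justification is the identification $A^{1/2}B^{1/2}=\sqrt{AB}$, so that $\varphi(1/2)$ may equally be written as $\|f(\sqrt{AB})\|$, as it appears on the first line of (\ref{mainag}). Here I would appeal to the commutativity of $\mathcal{A}$: by Lemma \ref{com} the positive operators $A^{1/2}$ and $B^{1/2}$ commute, whence $(A^{1/2}B^{1/2})^{2}=AB$, and taking the unique positive square root gives $A^{1/2}B^{1/2}=(AB)^{1/2}=\sqrt{AB}$. With this identification in place, the remainder is purely a matter of transcription, and I do not anticipate any serious obstacle: the entire analytic weight of the theorem has already been absorbed into Lemma \ref{lemn} (reducing operator GG-convexity of $f$ to scalar AG-convexity of $\varphi_{A,B}$) and into the scalar theorem furnishing (\ref{var}).
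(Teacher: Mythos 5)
Your proposal matches the paper's proof essentially verbatim: the paper likewise invokes Lemma \ref{lemn} to establish that $\varphi_{A,B}(t)=\|f(A^{t}B^{1-t})\|$ is AG-convex on $[0,1]$ and then applies the scalar chain (\ref{var}) with $a=0$ and $b=1$, reading off the midpoint, quarter-point, endpoint and integral terms exactly as you describe. Your explicit remark that $A^{1/2}B^{1/2}=\sqrt{AB}$ by commutativity is a detail the paper leaves implicit, but it does not change the argument.
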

\begin{proof}
Since $f$ is an operator GG-convex function,  by Lemma \ref{lemn} we can say $\varphi_{A,B}(t)=\|f(A^{t}B^{1-t})\|$ is an AG-convex function on $[0,1]$ .
Applying inequalities (\ref{var}) for $a=0$ and $b=1$, we have
\begin{eqnarray*}
\varphi\left(\frac{1}{2}\right)&\leq& \sqrt{\varphi\left(\frac{1}{4}\right)\varphi\left(\frac{3}{4}\right)}\\
&\leq& \exp\left(\int_{0}^{1}\log\circ \varphi(u)du\right)\\
&\leq&\sqrt{\varphi\left(\frac{1}{2}\right)}.\sqrt[4]{\varphi(0)}.\sqrt[4]{\varphi(1)}\\
&\leq& \sqrt{\varphi(0)\varphi(1)}.
\end{eqnarray*}
It follows that
\begin{eqnarray*}
\|f(A^{\frac{1}{2}}B^{\frac{1}{2}})\|&\leq& \sqrt{\|f(A^{\frac{1}{4}}B^{\frac{3}{4}})\|f(A^{\frac{3}{4}}B^{\frac{1}{4}})\|}\\
&\leq& \exp\left(\int_{0}^{1}\log \|f(A^{u}B^{1-u})\|du\right)\\
&\leq&\sqrt{\|f(A^{\frac{1}{2}}B^{\frac{1}{2}})\|}.\sqrt[4]{\|f(B)\|}.\sqrt[4]{\|f(A)\|}\\
&\leq& \sqrt{\|f(B)\|\|f(A)\|}.
\end{eqnarray*}
\end{proof}

\begin{remark}
Let $A, B\in \mathcal{A}$ and $A\leq B$, by continuous functional calculus \cite[Theorem 10.3 (b)]{zhu}, we can easily obtain $\exp(A)\leq \exp(B)$. This means $\exp(t)$ is operator monotone on $[0,\infty)$ for $A,B\in\mathcal{A}$.

 On the other hand, like the classical case, the arithmetic-geometric mean inequality
holds for operators as following
\begin{equation}\label{ag2}
A^{\frac{1}{2}}\left(A^{-\frac{1}{2}}BA^{-\frac{1}{2}}\right)^{\nu}A^{\frac{1}{2}}\leq (1-\nu) A+\nu B, \ \ \nu\in[0,1]
\end{equation}
with respect to operator order for positive non-commutative operators in $B(H)$. Whenever, $A$ and $B$ commute together, then inequality (\ref{ag2}) reduces to 
\begin{equation*}
A^{1-\nu}B^{\nu}\leq (1-\nu)A+\nu B, \ \ \nu\in[0,1].
\end{equation*}
Since $\exp(t)$ is an operator monotone function, by the above inequality we have 
\begin{eqnarray*}
\exp\left({A^{1-\nu}B^{\nu}}\right)&\leq & \exp\left((1-\nu)A+\nu B\right)\\
&=& \exp((1-\nu)A)\exp(\nu B)\\
&=& \exp(A)^{1-\nu}\exp(B)^{\nu},
\end{eqnarray*}
for $A, B\in \mathcal{A}^{+}$ and $\nu\in [0,1]$. So, in this case $\exp(t)$ is an operator geometrically convex function on $[0,\infty)$.

By inequalities (\ref{mainag}), we obtain
\begin{eqnarray}\label{rem2}
\|\exp(\sqrt{AB})\|&\leq& \sqrt{\|\exp(A^{\frac{1}{4}}B^{\frac{3}{4}})\|\exp(A^{\frac{3}{4}}B^{\frac{1}{4}})\|}\nonumber\\
&\leq& \exp\left(\int_{0}^{1}\log \|\exp(A^{u}B^{1-u})\|du\right)\nonumber\\
&\leq&\sqrt{\|\exp(A^{\frac{1}{2}}B^{\frac{1}{2}})\|}.\sqrt[4]{\|\exp(B)\|}.\sqrt[4]{\|\exp(A)\|}\nonumber\\
&\leq& \sqrt{\|\exp(A)\|\|\exp(B)\|}.
\end{eqnarray}
\end{remark}
\begin{remark}
Let $\{e_{i}\}_{i\in I}$ be an orthonormal basis of $H$, we say that $A\in B(H)$ is \textit{trace class} if 
\begin{equation}\label{dd7}
\|A\|_{1}:=\sum_{i\in I}\langle |A|e_{i},e_{i}\rangle <\infty.
\end{equation}
The definition of $\|A\|_{1}$ does not depend on the choice of the orthonormal basis $\{e_{i}\}_{i\in I}$. We denote by $B_{1}(H)$ the set of trace class operators in $B(H)$.

We define the \textit{trace} of a trace class operator $A\in B_{1}(H)$ to be 
\begin{equation}\label{dd9}
\Tr(A):=\sum_{i\in I}\langle Ae_{i}, e_{i}\rangle,
\end{equation}
where $\{e_{i}\}_{i\in I}$ an orthonormal basis of $H$. 

Note that this coincides with the usual definition of the trace if $H$ is finite-dimensional. We observe that the series (\ref{dd9}) converges absolutely.\\

The following result collects some properties of the trace:

\begin{theorem}
We have

(i) If $A\in B_{1}(H)$ then $A^{*}\in B_{1}(H)$ and 
\begin{equation}\label{dd10}
\Tr(A^{*})=\overline{\Tr(A)};
\end{equation}

(ii) If $A\in B_{1}(H)$ and $T\in B(H)$, then $AT, TA\in B_{1}(H)$ and
\begin{equation}\label{dd11}
\Tr(AT)=\Tr(TA) \ \ \ and \ \ \ |\Tr(AT)|\leq \|A\|_{1}\|T\|;
\end{equation}

(iii) $\Tr(\cdot)$ is a bounded linear functional on $B_{1}(H)$ with $\|\Tr\|=1$;

(iv) If $A, B\in B_{1}(H)$ then  $\Tr(AB)=\Tr(BA)$.
\end{theorem}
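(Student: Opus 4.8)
The plan is to treat the four parts in an order that lets the later ones inherit from the earlier, since (iii) and (iv) are essentially immediate once (ii) is in hand. The conceptual engine for everything is the factorization of a trace-class operator through Hilbert--Schmidt operators, together with the polar and singular-value (Schmidt) decompositions of a compact operator. So before anything else I would record that the finiteness of $\|A\|_1=\Tr(|A|)$ forces the positive operator $|A|=(A^{*}A)^{1/2}$ to be compact (its square root is Hilbert--Schmidt), and hence that $A=U|A|$ (polar decomposition, $U$ a partial isometry) is compact with a Schmidt expansion $A=\sum_{n}s_{n}\langle\cdot,f_{n}\rangle g_{n}$, where $s_{n}\geq 0$, $\{f_{n}\}$ and $\{g_{n}=Uf_{n}\}$ are orthonormal, and $\sum_{n}s_{n}=\|A\|_1<\infty$. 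I would also invoke the basis-independence of $\|A\|_1$ and the absolute convergence of the series defining $\Tr(A)$, both asserted in the text above, since these are what make the rearrangements of sums below legitimate.

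For part (i), the identity $\Tr(A^{*})=\overline{\Tr(A)}$ is a direct computation: for each $i$ one has $\langle A^{*}e_{i},e_{i}\rangle=\langle e_{i},Ae_{i}\rangle=\overline{\langle Ae_{i},e_{i}\rangle}$, and absolute convergence lets the conjugation be pulled through the sum. For the membership $A^{*}\in B_{1}(H)$ I would read off the Schmidt expansion $A^{*}=\sum_{n}s_{n}\langle\cdot,g_{n}\rangle f_{n}$, which shows $A^{*}$ has the very same singular values $s_{n}$, so that $\|A^{*}\|_1=\sum_{n}s_{n}=\|A\|_1<\infty$.

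Part (ii) is the heart of the matter. To see that $AT$ and $TA$ lie in $B_{1}(H)$ I would write $A=\bigl(U|A|^{1/2}\bigr)|A|^{1/2}$ as a product of two Hilbert--Schmidt operators---indeed $\Tr\bigl(|A|^{1/2}\,|A|^{1/2}\bigr)=\Tr(|A|)=\|A\|_1<\infty$, and $U^{*}U\leq 1_{H}$ controls the other factor---and then use that the Hilbert--Schmidt class is a two-sided ideal to conclude that $AT=\bigl(U|A|^{1/2}\bigr)\bigl(|A|^{1/2}T\bigr)$ is again a product of Hilbert--Schmidt operators, hence trace class, with $\|AT\|_1\leq\|A\|_1\|T\|$; likewise for $TA$. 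This estimate already yields $|\Tr(AT)|\leq\|AT\|_1\leq\|A\|_1\|T\|$. For the cyclicity $\Tr(AT)=\Tr(TA)$ I would substitute the Schmidt expansion of $A$ and reduce both traces to the common closed form $\sum_{n}s_{n}\langle Tg_{n},f_{n}\rangle$, each reduction being a single application of Parseval's identity $\sum_{i}\langle x,e_{i}\rangle\langle e_{i},y\rangle=\langle x,y\rangle$, the absolute convergence justifying the interchange of the $i$- and $n$-summations.

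Finally, parts (iii) and (iv) fall out. Linearity of $\Tr$ is immediate from its definition, and taking $T=1_{H}$ in (ii) gives $|\Tr(A)|\leq\|A\|_1$, so $\|\Tr\|\leq 1$; testing on a rank-one projection $A=\langle\cdot,e\rangle e$ with $\|e\|=1$, for which $\|A\|_1=\Tr(A)=1$, shows $\|\Tr\|=1$. Part (iv) is then just (ii) applied with the bounded operator $T$ replaced by $B\in B_{1}(H)\subseteq B(H)$. I expect the main obstacle to be part (ii): both the ideal property and the cyclic identity genuinely require the Hilbert--Schmidt factorization and careful handling of the double sums, whereas (i), (iii) and (iv) are bookkeeping once that structure is in place.
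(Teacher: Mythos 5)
The paper does not actually prove this theorem: it is quoted as a collection of standard properties of the trace, with the reader referred to Simon's \emph{Trace Ideals and Their Applications} for the theory. Your argument is the standard textbook proof of exactly these facts --- polar decomposition and the Schmidt expansion to control $A^{*}$ and to evaluate $\Tr(AT)$ and $\Tr(TA)$ via Parseval, the factorization $A=\bigl(U|A|^{1/2}\bigr)|A|^{1/2}$ through Hilbert--Schmidt operators to get the ideal property and the bound $\|AT\|_{1}\leq\|A\|_{1}\|T\|$, and then (iii) and (iv) as corollaries --- and it is correct; the only steps left implicit (that $|\Tr(S)|\leq\|S\|_{1}$ for trace-class $S$, and the norm inequalities $\|RS\|_{1}\leq\|R\|_{2}\|S\|_{2}$ and $\|ST\|_{2}\leq\|S\|_{2}\|T\|$ behind the estimate in (ii)) are themselves routine consequences of the same Schmidt-expansion machinery you set up. So there is nothing in the paper to compare against; your proof supplies what the paper omits.
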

For the theory of trace functionals and their applications the reader is referred to \cite{sim}.

We know that $f(t)=\Tr(t)$ is an operator GG-convex function \cite[p.513]{hor}, i.e. 
$$\Tr(A^{t}B^{1-t})\leq \Tr (A)^{t}\Tr(B)^{1-t}$$
for $0\leq t\leq 1$ and positive operators $A,B\in B_{1}(H) $.\\
For $A,B\geq 0$ we have $\Tr (AB)\leq \Tr(A)\Tr(B)$. Also, since  $f(t)=t^{\frac{1}{2}}$ is monotone  we have
\begin{equation*}
\sqrt{\Tr(AB)}\leq \sqrt{\Tr(A)\Tr(B)}
\end{equation*}
for positive operator $A$ and $B$ in $B_{1}(H)$.\\
For commutative case, we have
\begin{equation}\label{ot1}
\sqrt{\Tr(AB)}\leq \Tr(\sqrt{AB})\leq \sqrt{\Tr(A)\Tr(B)},
\end{equation}
since $(\Tr (AB))^{\frac{1}{2}}\leq \Tr(AB)^{\frac{1}{2}}$.\\
By inequalities (\ref{ot1}) and (\ref{mainag}), we obtain
\begin{eqnarray}
\sqrt{\Tr(AB)}&\leq&\Tr(\sqrt{AB})\nonumber\\
&\leq& \sqrt{\Tr(A^{\frac{1}{4}}B^{\frac{3}{4}})\Tr(A^{\frac{3}{4}}B^{\frac{1}{4}})}\nonumber\\
&\leq& \exp\left(\int_{0}^{1}\log \Tr(A^{u}B^{1-u})du\right)\nonumber\\
&\leq&\sqrt{\Tr(A^{\frac{1}{2}}B^{\frac{1}{2}})}.\sqrt[4]{\Tr(B)}.\sqrt[4]{\Tr(A)}\nonumber\\
&\leq& \sqrt{\Tr(A)\Tr(B)}.
\end{eqnarray}

Let replace $A$ and $B$ by $A^{2}$ and $B^{2}$ in the above inequality, respectively. By applying commutativity of algebra and knowing that $\Tr(A)^{2}\leq (\Tr A)^{2}$ for positive operator $A$, we have
\begin{eqnarray}
\Tr(AB)&\leq& \sqrt{\Tr(A^{\frac{1}{2}}B^{\frac{3}{2}})Tr(A^{\frac{3}{2}}B^{\frac{1}{2}})}\nonumber\\
&\leq& \exp\left(\int_{0}^{1}\log \Tr\left(A^{2u}B^{2(1-u)}\right)du\right)\nonumber\\
&\leq&\sqrt{\Tr(AB)}.\sqrt[4]{\Tr(B^{2})}.\sqrt[4]{\Tr(A^{2})}\nonumber\\
&\leq&\Tr(A)\Tr(B).
\end{eqnarray}
\end{remark}
\section{\textbf{Some unitarily invariant norm inequalities for non-commutative operators}}

In this section we prove some unitarily invariant norm inequalities for operators.

We consider the wide class of unitarily invariant
norms $|||\cdot|||$. Each of these norms is defined on an ideal in
$B(H)$ and it will be implicitly understood that when we talk of
$|||T|||$, then the operator $T$ belongs to the norm ideal
associated with $|||\cdot|||$. Each unitarily invariant norm
$|||\cdot|||$ is characterized by the invariance property
$|||UTV|||=|||T|||$ for all operators $T$ in the norm ideal
associated with $|||\cdot|||$ and for all unitary operators $U$ and
$V$ in $B(H)$.
 For $1\leq
p<\infty$, the Schatten $p$-norm of a compact operator $A$ is
defined by $\|A\|_{p}=(\Tr |A|^{p})^{1/p}$, where $\Tr$ is the usual
trace functional. These norms are
special examples of the more general class of the Schatten
$p$-norms which are unitarily invariant \cite{bha}.

The author of \cite{kit} proved that if $A, B, X\in B(H)$ such that $A, B$ are positive operators, then for $0\leq\nu\leq 1$ we have
\begin{equation}\label{ksab}
|||A^{\nu}XB^{1-\nu}|||\leq |||AX|||^{\nu}|||XB|||^{1-\nu}.
\end{equation}
\\
The idea of the following lemma comes from \cite[Lemma 2.1]{sab}.
\begin{lemma}\label{sabb}
Let $A,B,X\in B(H)$ such that $A$ and $B$ are positive operators. Then
$$f(t)=|||A^{t}XB^{1-t}|||$$
is AG-convex for $t\in [0,1]$.
\end{lemma}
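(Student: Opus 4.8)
The plan is to deduce the log-convexity of $f$ directly from Kittaneh's interpolation inequality (\ref{ksab}), which is in fact nothing but the endpoint case $u=1$, $v=0$ of the very statement to be proved. Concretely, I must establish
\[
f(\lambda u+(1-\lambda)v)\leq f(u)^{\lambda}f(v)^{1-\lambda}
\]
for all $u,v\in[0,1]$ and $\lambda\in[0,1]$. Since this inequality is symmetric under exchanging the pairs $(u,\lambda)$ and $(v,1-\lambda)$, and is trivial when $u=v$, I may assume $u\leq v$ and set $w:=v-u\geq0$. Note that $A,X,B$ do not commute, so the product-factorization trick used for Lemma \ref{lemn} is unavailable; the correct tool here is (\ref{ksab}).

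The heart of the argument is a factorization that recasts the midpoint operator in the one-parameter interpolation form appearing in (\ref{ksab}). First I write $s:=\lambda u+(1-\lambda)v=(1-\nu)u+\nu v$ with $\nu:=1-\lambda$, so that $s=u+\nu w$ and $1-s=(1-u)-\nu w$. Using only the additivity of powers of a single positive operator (functional calculus), I would then verify the identity
\[
A^{s}XB^{1-s}=(A^{w})^{\nu}\,\bigl(A^{u}XB^{1-v}\bigr)\,(B^{w})^{1-\nu},
\]
which holds because $(A^{w})^{\nu}A^{u}=A^{\nu w+u}=A^{s}$ and $B^{1-v}(B^{w})^{1-\nu}=B^{(1-v)+w(1-\nu)}=B^{1-s}$. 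The decisive point is that every exponent involved, namely $w$, $u$, and $1-v$, is nonnegative because $u\leq v$ and $u,v\in[0,1]$; hence the factorization is legitimate for merely positive (not necessarily invertible) $A$ and $B$, with no negative powers ever appearing.

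With $A_{1}:=A^{w}$, $B_{1}:=B^{w}$ and $X_{1}:=A^{u}XB^{1-v}$, the previous display reads $A^{s}XB^{1-s}=A_{1}^{\nu}X_{1}B_{1}^{1-\nu}$, so applying (\ref{ksab}) gives
\[
f(s)=|||A_{1}^{\nu}X_{1}B_{1}^{1-\nu}|||\leq |||A_{1}X_{1}|||^{\nu}\,|||X_{1}B_{1}|||^{1-\nu}.
\]
It then remains only to identify the two endpoint factors. Since $w+u=v$, one has $A_{1}X_{1}=A^{w}A^{u}XB^{1-v}=A^{v}XB^{1-v}$, whence $|||A_{1}X_{1}|||=f(v)$; and since $1-v+w=1-u$, one has $X_{1}B_{1}=A^{u}XB^{1-v}B^{w}=A^{u}XB^{1-u}$, whence $|||X_{1}B_{1}|||=f(u)$. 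Substituting and recalling $\nu=1-\lambda$ yields $f(s)\leq f(v)^{1-\lambda}f(u)^{\lambda}$, which is precisely the claimed AG-convexity.

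The routine parts are the exponent bookkeeping and the two endpoint evaluations; the only genuine obstacle is discovering the factorization of the second paragraph, that is, recognizing that the two independent parameters $u$ and $v$ can be absorbed into a single interpolation between $A_{1}X_{1}$ and $X_{1}B_{1}$ so that Kittaneh's inequality applies. Once the correct middle operator $X_{1}=A^{u}XB^{1-v}$ is guessed, everything else is forced, and I expect no analytic difficulty beyond keeping all fractional powers of nonnegative order so that the manipulation is valid without assuming invertibility of $A$ or $B$.
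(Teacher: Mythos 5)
Your proof is correct and is essentially the same argument as the paper's: both factor the interpolated operator as $(A^{v-u})^{\nu}\,(A^{u}XB^{1-v})\,(B^{v-u})^{1-\nu}$ and apply Kittaneh's inequality (\ref{ksab}) to the positive operators $A^{v-u}$, $B^{v-u}$ with middle factor $A^{u}XB^{1-v}$. Your version is in fact slightly more careful than the paper's, since by fixing $u\leq v$ you guarantee all exponents are nonnegative, whereas the paper writes $A^{t-s}$, $B^{t-s}$ without noting that one must order $s,t$ to avoid negative powers of non-invertible operators.
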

\begin{proof}
We should prove that $f(\alpha t+(1-\alpha)s)\leq f(t)^{\alpha}f(s)^{1-\alpha}$, for $\alpha\in(0,1)$ and $s,t\in[0,1]$.
\begin{eqnarray*}
f(\alpha t+(1-\alpha)s)&=&|||A^{\alpha t+(1-\alpha)s}XB^{1-(\alpha t+(1-\alpha)s)}|||\\
&=&|||A^{\alpha(t-s)} A^{s}XB^{1-t}B^{(1-\alpha)(t-s)}|||\\
&\leq&|||A^{t-s}A^{s}XB^{1-t}|||^{\alpha}|||A^{s}XB^{1-t}B^{t-s}|||^{1-\alpha} \ \ \ \ \ \ \  \ \text{by (\ref{ksab})}\\
&=&|||A^{t}XB^{1-t}|||^{\alpha}|||A^{s}XB^{1-s}|||^{1-\alpha}\\
&=& f(t)^{\alpha}f(s)^{1-\alpha}.
\end{eqnarray*}
So, $f(t)=|||A^{t}XB^{1-t}|||$ is an AG-convex function.
\end{proof}

Applying inequalities (\ref{recall}) to the function $f(t)=|||A^{t}XB^{1-t}|||$ on the interval $[\nu,1-\nu]$ when $\nu\in[0,\frac{1}{2})$ and on the interval $[1-\nu,\nu]$ when $\nu\in (\frac{1}{2},1]$, we obtain the following theorem.
\begin{theorem}\label{sta}
Let $A,B,X\in B(H)$ such that $A$ and $B$ are positive operators. Then
\begin{enumerate}
\item For $\nu\in[0,\frac{1}{2})$, we have
\begin{eqnarray*}
|||A^{\frac{1}{2}}XB^{\frac{1}{2}}|||&\leq& |||A^{\frac{1+2\nu}{4}}XB^{\frac{3-2\nu}{4}}|||^{\frac{1}{2}}|||A^{\frac{3-2\nu}{4}}XB^{\frac{1+2\nu}{4}}|||^{\frac{1}{2}}\\
&\leq& \exp\left(\frac{1}{1-2\nu}\int_{\nu}^{1-\nu}\log |||A^{u}XB^{1-u}|||du\right)\\\
&\leq&|||A^{\frac{1}{2}}XB^{\frac{1}{2}}|||^{\frac{1}{2}}|||A^{\nu}XB^{1-\nu}|||^{\frac{1}{4}}|||A^{1-\nu}XB^{\nu}|||^{\frac{1}{4}}\\
&\leq& |||A^{\nu}XB^{1-\nu}|||^{\frac{1}{2}}|||A^{1-\nu}XB^{\nu}|||^{\frac{1}{2}}.
\end{eqnarray*}
\item
Also, for $\nu\in(\frac{1}{2},1]$, we have
\begin{eqnarray*}
|||A^{\frac{1}{2}}XB^{\frac{1}{2}}|||&\leq& |||A^{\frac{1+2\nu}{4}}XB^{\frac{3-2\nu}{4}}|||^{\frac{1}{2}}|||A^{\frac{3-2\nu}{4}}XB^{\frac{1+2\nu}{4}}|||^{\frac{1}{2}}\\
&\leq& \exp\left(\frac{1}{2\nu-1}\int_{1-\nu}^{\nu}\log |||A^{u}XB^{1-u}|||du\right)\\\
&\leq&|||A^{\frac{1}{2}}XB^{\frac{1}{2}}|||^{\frac{1}{2}}|||A^{\nu}XB^{1-\nu}|||^{\frac{1}{4}}|||A^{1-\nu}XB^{\nu}|||^{\frac{1}{4}}\\
&\leq& |||A^{\nu}XB^{1-\nu}|||^{\frac{1}{2}}|||A^{1-\nu}XB^{\nu}|||^{\frac{1}{2}}.
\end{eqnarray*}
\end{enumerate}
\end{theorem}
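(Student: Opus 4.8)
The plan is to reduce the whole statement to the scalar Hermite--Hadamard chain (\ref{recall}) via Lemma \ref{sabb}. That lemma already establishes that the single--variable function $f(t)=|||A^{t}XB^{1-t}|||$ is AG-convex on $[0,1]$, so no fresh convexity argument is required: once the correct subinterval is fixed, the entire theorem is obtained by substituting this particular $f$ into (\ref{recall}) (equivalently, into the restated form (\ref{var})). All the work is in matching the evaluation points produced by the midpoint, the quarter points, and the logarithmic average.

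For part (1), I would apply (\ref{recall}) on $[a,b]=[\nu,1-\nu]$, which is a legitimate choice precisely because $\nu\in[0,\tfrac{1}{2})$ forces $a<b$. The midpoint $\frac{a+b}{2}=\frac{1}{2}$ gives $f(\tfrac{1}{2})=|||A^{1/2}XB^{1/2}|||$; the quarter points $\frac{3a+b}{4}=\frac{1+2\nu}{4}$ and $\frac{a+3b}{4}=\frac{3-2\nu}{4}$ produce the two factors $|||A^{(1+2\nu)/4}XB^{(3-2\nu)/4}|||$ and $|||A^{(3-2\nu)/4}XB^{(1+2\nu)/4}|||$; the length factor $\frac{1}{b-a}=\frac{1}{1-2\nu}$ turns the geometric--mean term into $\exp\!\big(\frac{1}{1-2\nu}\int_{\nu}^{1-\nu}\log|||A^{u}XB^{1-u}|||\,du\big)$; and the endpoints $f(a)=|||A^{\nu}XB^{1-\nu}|||$, $f(b)=|||A^{1-\nu}XB^{\nu}|||$ fill in the last two lines. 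Reading (\ref{recall}) top to bottom with these identifications yields exactly the claimed chain.

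Part (2) is the mirror image. Here $\nu\in(\tfrac{1}{2},1]$, so I would instead take $[a,b]=[1-\nu,\nu]$ to keep $a<b$, whence $\frac{1}{b-a}=\frac{1}{2\nu-1}$ and the integral runs from $1-\nu$ to $\nu$. The midpoint is again $\tfrac{1}{2}$, but the two endpoints swap roles, so the quarter--point factors appear in the opposite order; since each enters only to the power $\tfrac{1}{2}$ and they occur as a product, the displayed expression is unaffected, and likewise $\sqrt{f(a)f(b)}$ reproduces the same final bound. The only genuinely delicate point — and the reason the statement splits into two cases that both exclude $\nu=\tfrac{1}{2}$ — is that (\ref{recall}) requires a nondegenerate interval $a<b$: at $\nu=\tfrac{1}{2}$ the interval $[\nu,1-\nu]$ collapses to a single point and the logarithmic average is undefined. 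Beyond this bookkeeping there is no analytic obstacle, since the substitutions are elementary and the positivity of $A$ and $B$ underpinning Lemma \ref{sabb} is already assumed.
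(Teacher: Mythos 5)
Your proposal is correct and follows the same route as the paper: both invoke Lemma \ref{sabb} to get AG-convexity of $f(t)=|||A^{t}XB^{1-t}|||$ and then substitute $[a,b]=[\nu,1-\nu]$ (resp.\ $[1-\nu,\nu]$) into the scalar chain (\ref{recall}), with the same identification of midpoint, quarter points, endpoints, and normalizing factor $\frac{1}{b-a}$. Your remark about the degenerate interval at $\nu=\tfrac{1}{2}$ is a sensible observation that the paper leaves implicit.
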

\begin{proof}
Let $\nu\in[0,\frac{1}{2})$, then we have by inequalities (\ref{recall}) and Lemma \ref{sabb} that
\begin{eqnarray*}
f\left(\frac{\nu+1-\nu}{2}\right)&\leq& \sqrt{f\left(\frac{3\nu+1-\nu}{4}\right)f\left(\frac{\nu+3(1-\nu)}{4}\right)}\\
&\leq& \exp\left(\frac{1}{1-2\nu}\int_{\nu}^{1-\nu}\log\circ f(u)du\right)\\
&\leq&\sqrt{f\left(\frac{\nu+1-\nu}{2}\right)}.\sqrt[4]{f(\nu)}.\sqrt[4]{f(1-\nu)}\\
&\leq& \sqrt{f(\nu)f(1-\nu)},
\end{eqnarray*}
and so
\begin{eqnarray*}
|||A^{\frac{1}{2}}XB^{\frac{1}{2}}|||&\leq& |||A^{\frac{1+2\nu}{4}}XB^{\frac{3-2\nu}{4}}|||^{\frac{1}{2}}|||A^{\frac{3-2\nu}{4}}XB^{\frac{1+2\nu}{4}}|||^{\frac{1}{2}}\\
&\leq& \exp\left(\frac{1}{1-2\nu}\int_{\nu}^{1-\nu}\log |||A^{u}XB^{1-u}|||du\right)\\
&\leq&|||A^{\frac{1}{2}}XB^{\frac{1}{2}}|||^{\frac{1}{2}}|||A^{\nu}XB^{1-\nu}|||^{\frac{1}{4}}|||A^{1-\nu}XB^{\nu}|||^{\frac{1}{4}}\\
&\leq& |||A^{\nu}XB^{1-\nu}|||^{\frac{1}{2}}|||A^{1-\nu}XB^{\nu}|||^{\frac{1}{2}}.
\end{eqnarray*}
Similarily, for $\nu\in(\frac{1}{2},1]$ we have
\begin{eqnarray*}
|||A^{\frac{1}{2}}XB^{\frac{1}{2}}|||&\leq& |||A^{\frac{1+2\nu}{4}}XB^{\frac{3-2\nu}{4}}|||^{\frac{1}{2}}|||A^{\frac{3-2\nu}{4}}XB^{\frac{1+2\nu}{4}}|||^{\frac{1}{2}}\\
&\leq& \exp\left(\frac{1}{2\nu-1}\int_{1-\nu}^{\nu}\log |||A^{u}XB^{1-u}|||du\right)\\
&\leq&|||A^{\frac{1}{2}}XB^{\frac{1}{2}}|||^{\frac{1}{2}}|||A^{\nu}XB^{1-\nu}|||^{\frac{1}{4}}|||A^{1-\nu}XB^{\nu}|||^{\frac{1}{4}}\\
&\leq& |||A^{\nu}XB^{1-\nu}|||^{\frac{1}{2}}|||A^{1-\nu}XB^{\nu}|||^{\frac{1}{2}}.
\end{eqnarray*}
This completes the proof.
\end{proof}
Applying inequalities (\ref{recall}) to the function $f(t)=|||A^{t}XB^{1-t}|||$ on the interval $[0,\nu]$ when $0<\nu\leq\frac{1}{2}$ and on the interval $[\nu,1]$ when $\frac{1}{2}\leq\nu<1$, we obtain the following theorem.
\begin{theorem}
Let $A,B,X\in B(H)$ such that $A$ and $B$ are positive operators. Then
\begin{enumerate}
\item For $0<\nu\leq\frac{1}{2}$, we have
\begin{eqnarray*}
|||A^{\frac{\nu}{2}}XB^{1-\frac{\nu}{2}}|||&\leq& |||A^{\frac{\nu}{4}}XB^{1-\frac{\nu}{4}}|||^{\frac{1}{2}}|||A^{\frac{3\nu}{4}}XB^{1-\frac{3\nu}{4}}|||^{\frac{1}{2}}\\
&\leq& \exp\left(\frac{1}{\nu}\int_{0}^{\nu}\log |||A^{u}XB^{1-u}|||du\right)\\\
&\leq&|||A^{\frac{\nu}{2}}XB^{1-\frac{\nu}{2}}|||^{\frac{1}{2}}|||XB|||^{\frac{1}{4}}|||A^{\nu}XB^{1-\nu}|||^{\frac{1}{4}}\\
&\leq& |||XB|||^{\frac{1}{2}}|||A^{\nu}XB^{1-\nu}|||^{\frac{1}{2}}.
\end{eqnarray*}
\item
Also, for $\frac{1}{2}\leq\nu\leq1$, we have
\begin{eqnarray*}
|||A^{\frac{\nu+1}{2}}XB^{1-\frac{\nu+1}{2}}|||&\leq& |||A^{\frac{3\nu+1}{4}}XB^{1-\frac{3\nu+1}{4}}|||^{\frac{1}{2}}|||A^{\frac{\nu+3}{4}}XB^{1-\frac{\nu+3}{4}}|||^{\frac{1}{2}}\\
&\leq& \exp\left(\frac{1}{1-\nu}\int_{\nu}^{1}\log |||A^{u}XB^{1-u}|||du\right)\\\
&\leq&|||A^{\frac{\nu+1}{2}}XB^{1-\frac{\nu+1}{2}}|||^{\frac{1}{2}}|||A^{\nu}XB^{1-\nu}|||^{\frac{1}{4}}|||AX|||^{\frac{1}{4}}\\
&\leq& |||A^{\nu}XB^{1-\nu}|||^{\frac{1}{2}}|||AX|||^{\frac{1}{2}}.
\end{eqnarray*}
\end{enumerate}
\end{theorem}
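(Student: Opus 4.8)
The plan is to reuse, essentially verbatim, the mechanism behind Theorem~\ref{sta}: Lemma~\ref{sabb} guarantees that $f(t)=|||A^{t}XB^{1-t}|||$ is AG-convex on $[0,1]$, and the Hermite--Hadamard chain~(\ref{recall}) holds for any AG-convex function on any interval. Since AG-convexity on $[0,1]$ restricts to AG-convexity on every subinterval, I would simply apply~(\ref{recall}) to $f$ on $[0,\nu]$ for the first case and on $[\nu,1]$ for the second, and then read off the four resulting inequalities after substituting the definition of $f$.

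For part~(1), where $0<\nu\leq\frac{1}{2}$, I would set $a=0$ and $b=\nu$ in~(\ref{recall}). Then the midpoint $\frac{a+b}{2}$ becomes $\frac{\nu}{2}$, the quarter-points $\frac{3a+b}{4}$ and $\frac{a+3b}{4}$ become $\frac{\nu}{4}$ and $\frac{3\nu}{4}$, the integral normalization $\frac{1}{b-a}$ becomes $\frac{1}{\nu}$, and the endpoint values are $f(0)=|||XB|||$ and $f(\nu)=|||A^{\nu}XB^{1-\nu}|||$. Substituting $f(t)=|||A^{t}XB^{1-t}|||$ throughout, and keeping each $A$-exponent $s$ paired with the $B$-exponent $1-s$, reproduces the first displayed block exactly.

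For part~(2), where $\frac{1}{2}\leq\nu<1$, I would instead take $a=\nu$ and $b=1$. Now the midpoint is $\frac{\nu+1}{2}$, the quarter-points are $\frac{3\nu+1}{4}$ and $\frac{\nu+3}{4}$, the integral factor $\frac{1}{b-a}$ is $\frac{1}{1-\nu}$, and the endpoints read $f(\nu)=|||A^{\nu}XB^{1-\nu}|||$ and $f(1)=|||AX|||$. Reading off~(\ref{recall}) with these values yields the second block.

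I do not expect a genuine obstacle here: all the analytic weight has already been carried by Lemma~\ref{sabb} (which rests on the norm inequality~(\ref{ksab})) and by the scalar Hermite--Hadamard inequalities~(\ref{recall}). The only points that need care are bookkeeping ones---checking that the two subintervals $[0,\nu]$ and $[\nu,1]$ genuinely lie inside $[0,1]$, so that~(\ref{recall}) is legitimately applicable, and verifying that the affine endpoint arithmetic produces precisely the exponents $\frac{\nu}{4},\frac{\nu}{2},\frac{3\nu}{4}$ in the first case and $\frac{3\nu+1}{4},\frac{\nu+1}{2},\frac{\nu+3}{4}$ in the second.
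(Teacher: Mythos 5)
Your proposal is correct and is exactly the argument the paper intends: the paper's proof of this theorem simply says it is similar to that of Theorem~\ref{sta}, i.e.\ apply the scalar chain~(\ref{recall}) to the AG-convex function $f(t)=|||A^{t}XB^{1-t}|||$ of Lemma~\ref{sabb} on the subintervals $[0,\nu]$ and $[\nu,1]$. Your endpoint and quarter-point bookkeeping reproduces the stated exponents precisely, so there is nothing to add.
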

\begin{proof}
It is similar to the proof of Theorem \ref{sta}.
\end{proof}
\begin{theorem}
Let $A,B,X\in B(H)$ such that $A$ and $B$ are positive operators. Then
\begin{eqnarray}
|||A^{\frac{1}{2}}XB^{\frac{1}{2}}|||&\leq& |||A^{\frac{1}{4}}XB^{\frac{3}{4}}|||^{\frac{1}{2}}|||A^{\frac{3}{4}}XB^{\frac{1}{4}}|||^{\frac{1}{2}}\nonumber\\
&\leq& \exp\left(\int_{0}^{1}\log |||A^{u}XB^{1-u}|||du\right)\nonumber\\
&\leq&|||A^{\frac{1}{2}}XB^{\frac{1}{2}}|||^{\frac{1}{2}}|||AX|||^{\frac{1}{4}}|||XB|||^{\frac{1}{4}}\nonumber\\
&\leq& |||AX|||^{\frac{1}{2}}|||XB|||^{\frac{1}{2}}. \label{thef}
\end{eqnarray}
\end{theorem}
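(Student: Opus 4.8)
The plan is to recognize this theorem as the direct specialization of the AG-convex chain of inequalities (\ref{recall}) to the particular function supplied by Lemma \ref{sabb}. No new analytic input is required beyond what has already been assembled: the genuinely substantive step, namely verifying that $t\mapsto|||A^{t}XB^{1-t}|||$ is AG-convex on $[0,1]$, has already been carried out in Lemma \ref{sabb} by means of inequality (\ref{ksab}). Thus the entire theorem should follow by a single substitution.

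First I would set $f(t)=|||A^{t}XB^{1-t}|||$ and invoke Lemma \ref{sabb} to conclude that $f$ is AG-convex on $[0,1]$. I would then apply the four-term chain (\ref{recall}) with $a=0$ and $b=1$, which reduces everything to evaluating $f$ at the relevant points. The midpoint $\frac{a+b}{2}=\frac{1}{2}$ gives $f(1/2)=|||A^{1/2}XB^{1/2}|||$; the quarter points $\frac{3a+b}{4}=\frac{1}{4}$ and $\frac{a+3b}{4}=\frac{3}{4}$ give $|||A^{1/4}XB^{3/4}|||$ and $|||A^{3/4}XB^{1/4}|||$; the logarithmic integral carries weight $\frac{1}{b-a}=1$ and becomes $\int_{0}^{1}\log|||A^{u}XB^{1-u}|||\,du$; and the endpoints give $f(0)=|||A^{0}XB^{1}|||=|||XB|||$ and $f(1)=|||A^{1}XB^{0}|||=|||AX|||$.

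The only point demanding care is the endpoint bookkeeping: one must read off the correct endpoint values $f(0)=|||XB|||$ and $f(1)=|||AX|||$, and then match the third term $\sqrt{f(1/2)}\,\sqrt[4]{f(0)}\,\sqrt[4]{f(1)}$ and the fourth term $\sqrt{f(0)f(1)}$ of (\ref{recall}) against the last two lines of (\ref{thef}), rewriting the square and fourth roots as the exponents $\frac{1}{2}$ and $\frac{1}{4}$ displayed there. I do not expect any genuine obstacle at this stage, since once Lemma \ref{sabb} is in hand the proof is a mechanical substitution into (\ref{recall}), entirely parallel to the earlier specialization of this same chain to the interval $[0,1]$ that produced (\ref{mainag}).
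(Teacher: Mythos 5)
Your proposal is correct and matches the paper's proof exactly: the paper likewise disposes of this theorem by citing Lemma \ref{sabb} together with the chain (\ref{recall}) specialized to $a=0$, $b=1$. Your endpoint bookkeeping $f(0)=|||XB|||$, $f(1)=|||AX|||$ and the identification of the quarter-point and midpoint terms are all as intended.
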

\begin{proof}
It is easy to verify by inequalities (\ref{recall}) and Lemma \ref{sabb} for $a=0$ and $b=1$.
\end{proof}
Note that our inequalities (\ref{thef}) give a refinement of inequality (\ref{ksab}) when $\nu=\frac{1}{2}$.
\begin{lemma}\label{sabb2}
Let $A,B,X\in B(H)$ such that $A$ and $B$ are positive operators. Then
$$f(s)=|||A^{s}XB^{s}|||$$
is AG-convex for $s\in [0,1]$.
\end{lemma}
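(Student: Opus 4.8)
The plan is to reduce the statement to a single symmetric interpolation inequality and then to establish that inequality by complex interpolation. First I would record the underlying algebraic reduction. Fix \(\alpha\in[0,1]\) and \(r,t\in[0,1]\), and assume without loss of generality that \(r\le t\) (the AG-convexity inequality is symmetric under swapping \((t,\alpha)\) with \((r,1-\alpha)\)). Writing \(\mu=\alpha t+(1-\alpha)r=\alpha(t-r)+r\) and using only that \(A^{a}A^{b}=A^{a+b}\) for a single positive operator, one splits \(A^{\mu}=A^{\alpha(t-r)}A^{r}\) and \(B^{\mu}=B^{r}B^{\alpha(t-r)}\), so that
\[
A^{\mu}XB^{\mu}=A^{\alpha(t-r)}\bigl(A^{r}XB^{r}\bigr)B^{\alpha(t-r)}=P^{\alpha}YQ^{\alpha},
\]
where \(P=A^{t-r}\), \(Q=B^{t-r}\) are positive and \(Y=A^{r}XB^{r}\). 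Note that no commutation of \(A\) with \(B\) is needed here, which is consistent with the non-commutative setting. Since \(PYQ=A^{t}XB^{t}\) and \(Y=A^{r}XB^{r}\), proving \(f(\mu)\le f(t)^{\alpha}f(r)^{1-\alpha}\) for all admissible \(r,t,\alpha\) is exactly equivalent to the symmetric inequality
\[
|||P^{\nu}YQ^{\nu}|||\le|||PYQ|||^{\nu}\,|||Y|||^{1-\nu}\qquad(\star)
\]
for arbitrary positive \(P,Q\), arbitrary \(Y\), and \(\nu\in[0,1]\).

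It is worth stressing that, in contrast with Lemma \ref{sabb}, inequality (\ref{ksab}) cannot be applied directly here: the factoring above leaves equal powers \(P^{\alpha}\) and \(Q^{\alpha}\) on the two sides, whereas (\ref{ksab}) is built for the complementary powers \(A^{\nu}\) and \(B^{1-\nu}\), and no regrouping removes this mismatch (iterating (\ref{ksab}) only produces spurious cross terms \(|||PY|||\), \(|||YQ|||\)). Thus (\(\star\)) is a genuinely symmetric companion of (\ref{ksab}) and must be proved by the same interpolation idea that underlies (\ref{ksab}) itself.

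To prove (\(\star\)) I would first assume \(P,Q\) strictly positive, so that \(P^{z}=\exp(z\log P)\) and \(Q^{z}=\exp(z\log Q)\) are well defined, and consider the operator-valued function \(F(z)=P^{z}YQ^{z}\) on the closed strip \(0\le\Re z\le1\); it is analytic in the interior and bounded and continuous up to the boundary. On the left edge \(z=\ii y\) the operators \(P^{\ii y}\) and \(Q^{\ii y}\) are unitary, so unitary invariance gives \(|||F(\ii y)|||=|||Y|||\); on the right edge \(F(1+\ii y)=P^{\ii y}(PYQ)Q^{\ii y}\), whence \(|||F(1+\ii y)|||=|||PYQ|||\). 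Applying the Hadamard three-lines theorem in the unitarily invariant norm setting—obtained by pairing \(F(z)\) against a dual-norm-one operator through the trace functional, reducing to the classical scalar three-lines theorem, and then taking the supremum over such pairings—yields \(|||F(\nu)|||\le|||Y|||^{1-\nu}|||PYQ|||^{\nu}\), which is precisely (\(\star\)).

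Finally I would remove the invertibility hypothesis by applying (\(\star\)) to \(A+\varepsilon\) and \(B+\varepsilon\) and letting \(\varepsilon\to0^{+}\), using continuity of \(|||\cdot|||\) on the relevant norm ideal. The hard part will not be the algebraic reduction, which is routine, but the rigorous deployment of the three-lines theorem for unitarily invariant norms: one must verify analyticity and boundedness of \(F\) on the strip, identify the two boundary norms, and justify the trace-duality passage from the scalar theorem to the operator-norm statement.
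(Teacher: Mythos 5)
Your proof is correct in substance but takes a genuinely different, and considerably heavier, route than the paper, and it rests on one false side-claim. The paper \emph{does} apply (\ref{ksab}) directly: the trick is not to split both powers relative to the same endpoint. Writing the common exponent as $\alpha(t-s)+s$ for the power of $A$ but as $t+(1-\alpha)(s-t)$ for the power of $B$, one gets
$A^{\alpha t+(1-\alpha)s}XB^{\alpha t+(1-\alpha)s}=(A^{t-s})^{\alpha}\,(A^{s}XB^{t})\,(B^{s-t})^{1-\alpha}$,
which exhibits exactly the complementary powers $\alpha$ and $1-\alpha$ that (\ref{ksab}) requires; applying (\ref{ksab}) then yields $|||A^{t}XB^{t}|||^{\alpha}\,|||A^{s}XB^{s}|||^{1-\alpha}$ in one line. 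So your assertion that ``no regrouping removes this mismatch'' is incorrect. That said, your reduction of the lemma to the symmetric inequality $|||P^{\nu}YQ^{\nu}|||\le|||PYQ|||^{\nu}|||Y|||^{1-\nu}$ is sound, and that inequality is precisely Theorem 1 of \cite{kit}, which the paper itself quotes in its final Remark --- you could simply have cited it instead of re-deriving it; your three-lines/duality sketch is the standard proof of it and is essentially fine modulo the technical points you already flag. One genuine advantage of your decomposition is worth noting: by splitting both powers relative to the smaller index $r$ you only ever form nonnegative powers $A^{t-r}$, $B^{t-r}$, whereas the paper's regrouping uses $A^{t-s}$ and $B^{s-t}$, one of which is a negative power and hence implicitly requires invertibility (or an $\varepsilon$-perturbation the paper never mentions). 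So your argument is more careful on that point, at the cost of importing interpolation machinery where a one-line citation, or the paper's asymmetric regrouping, would have sufficed.
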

\begin{proof}
Similar to Lemma \ref{sabb}, we should prove that $f(\alpha t+(1-\alpha)s)\leq f(t)^{\alpha}f(s)^{1-\alpha}$, for $\alpha\in(0,1)$ and $s,t\in[0,1]$.
\begin{eqnarray*}
f(\alpha t+(1-\alpha)s)&=&|||A^{\alpha t+(1-\alpha)s}XB^{\alpha t+(1-\alpha)s}|||\\
&=&|||A^{\alpha(t-s)} A^{s}XB^{t}B^{(1-\alpha)(s-t)}|||\\
&\leq&|||A^{t-s}A^{s}XB^{t}|||^{\alpha}|||A^{s}XB^{t}B^{s-t}|||^{1-\alpha} \ \ \ \ \ \ \  \ \text{by (\ref{ksab})}\\
&=&|||A^{t}XB^{t}|||^{\alpha}|||A^{s}XB^{s}|||^{1-\alpha}\\
&=& f(t)^{\alpha}f(s)^{1-\alpha}.
\end{eqnarray*}
So, $f(s)=|||A^{s}XB^{s}|||$ is an AG-convex function.
\end{proof}
\begin{theorem}
Let $A,B,X\in B(H)$ such that $A$ and $B$ are positive operators. Then
\begin{eqnarray}
|||A^{\frac{1}{2}}XB^{\frac{1}{2}}|||&\leq& |||A^{\frac{1}{4}}XB^{\frac{1}{4}}|||^{\frac{1}{2}}|||A^{\frac{3}{4}}XB^{\frac{3}{4}}|||^{\frac{1}{2}}\nonumber\\
&\leq& \exp\left(\int_{0}^{1}\log |||A^{u}XB^{u}|||du\right)\nonumber\\
&\leq&|||A^{\frac{1}{2}}XB^{\frac{1}{2}}|||^{\frac{1}{2}}|||X|||^{\frac{1}{4}}|||AXB|||^{\frac{1}{4}}\nonumber\\
&\leq& |||X|||^{\frac{1}{2}}|||AXB|||^{\frac{1}{2}}. \label{thef2}
\end{eqnarray}
\end{theorem}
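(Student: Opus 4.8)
The plan is to mirror exactly the proof of the preceding theorem establishing (\ref{thef}): the only genuinely analytic ingredient, the AG-convexity of the relevant norm function, has already been isolated in Lemma \ref{sabb2}, so it remains to feed that function into the scalar Hermite--Hadamard chain (\ref{recall}) and read off its values at the dyadic points.

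Concretely, I would set $f(s)=|||A^{s}XB^{s}|||$ and invoke Lemma \ref{sabb2}, which asserts that $f$ is AG-convex on $[0,1]$. Applying the inequalities (\ref{recall}) (equivalently (\ref{var})) with $a=0$ and $b=1$ yields the four-term chain
\begin{equation*}
f\!\left(\tfrac{1}{2}\right)\leq\sqrt{f\!\left(\tfrac{1}{4}\right)f\!\left(\tfrac{3}{4}\right)}\leq\exp\!\left(\int_{0}^{1}\log f(u)\,du\right)\leq\sqrt{f\!\left(\tfrac{1}{2}\right)}\,\sqrt[4]{f(0)}\,\sqrt[4]{f(1)}\leq\sqrt{f(0)f(1)}.
\end{equation*}
I would then evaluate the points appearing here, using the convention $A^{0}=B^{0}=1_{H}$: the midpoint gives $f(1/2)=|||A^{\frac{1}{2}}XB^{\frac{1}{2}}|||$, the quarter points give $f(1/4)=|||A^{\frac{1}{4}}XB^{\frac{1}{4}}|||$ and $f(3/4)=|||A^{\frac{3}{4}}XB^{\frac{3}{4}}|||$, while the endpoints give $f(0)=|||X|||$ and $f(1)=|||AXB|||$. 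Substituting these into the chain above reproduces (\ref{thef2}) line by line.

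I do not expect any real obstacle at this stage: the substantive step, namely reducing a unitarily invariant norm inequality to a single scalar AG-convexity statement, is already carried out in Lemma \ref{sabb2}, whose proof in turn rests on the Kittaneh-type interpolation inequality (\ref{ksab}). The only point that needs a moment's attention is the correct evaluation of the boundary values $f(0)$ and $f(1)$, which is immediate once the convention $A^{0}=B^{0}=1_{H}$ is adopted, exactly as in the companion result (\ref{thef}).
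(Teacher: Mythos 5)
Your proposal is correct and follows exactly the paper's own route: the paper likewise proves (\ref{thef2}) by applying the scalar chain (\ref{recall}) with $a=0$, $b=1$ to the AG-convex function $f(s)=|||A^{s}XB^{s}|||$ furnished by Lemma \ref{sabb2}. Your write-up merely makes explicit the evaluation of $f$ at the dyadic points, which the paper leaves to the reader.
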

\begin{proof}
Apply inequalities (\ref{recall}) and Lemma \ref{sabb2} for $a=0$ and $b=1$.
\end{proof}
\begin{remark}
The above inequalities give a refinement of inequality \cite[Theorem 1]{kit} when $\nu=\frac{1}{2}$ which states that if $A,B,X\in B(H)$ such that $A,B$ are positive operators then
$$|||A^{\nu}XB^{\nu}|||\leq |||X|||^{1-\nu}|||AXB|||^{\nu}$$
for $0\leq\nu\leq1$.
\end{remark}
% Non-BibTeX users please use

\end{document}